\newtheorem{theorem}{Theorem}[section]
\newtheorem{lemma}[theorem]{Lemma}
\theoremstyle{definition}
\newtheorem{example}[theorem]{Example}
\theoremstyle{remark}
\newtheorem{remark}[theorem]{Remark}
\numberwithin{equation}{section}
\begin{document}

\title[$p$-solvability of regular equations over unitriangular groups]{$p$-solvability of regular equations over unitriangular groups over prime finite fields}

\author{Anton Menshov}
\address{Institute of Mathematics and Information Technologies\\Omsk State Dostoevskii University}
\curraddr{}
\email{menshov.a.v@gmail.com}
\thanks{}

\author{Vitali\u\i\ Roman'kov}
\address{Institute of Mathematics and Information Technologies\\Omsk State Dostoevskii University}
\curraddr{}
\email{romankov48@mail.ru}
\thanks{}

\keywords{equations over groups, regular equations, Kervaire-Laudenbach conjectures, nilpotent groups, unitriangular groups, $p$-groups}

\date{}

\begin{abstract}
An equation over a group with one unknown is called \emph{regular} if the exponent sum of the unknown is nonzero.
In this paper we prove that some regular equations of exponent $rp^s$, where $r \in \mathbb{Z}$, $s \in \mathbb{N}$, $\gcd(r,p)=1$, over the group UT$_n(\mathbb{F}_p)$ ($n \geq 2$) are solvable in an overgroup isomorphic to UT$_{(n-1)p^s + 1}(\mathbb{F}_p)$.
Applying this for $n=3$ we prove that any regular equation of exponent $rp^s$ over the Heisenberg $p$-group UT$_3(\mathbb{F}_p)$ is solvable in an overgroup isomorphic to UT$_{2p^s + 1}(\mathbb{F}_p)$.
The proofs of these results are constructive and allow to obtain solutions of equations in explicit form.

\end{abstract}

\maketitle

\section{Introduction}
\label{sec:intro}

An equation with the variable $x$ over a group $G$ is an expression of the form
\begin{equation}\label{eq:one_var_eq}
u(x)=1,
\end{equation}
where
\begin{equation}\label{eq:one_var_eq_left_part}
u(x) = x^{\epsilon_1} g_1 x^{\epsilon_2} g_2 \dots x^{\epsilon_n} g_n \in G \ast \langle x \rangle.
\end{equation}

If $H$ is a bigger group, i.e., a group containing $G$ as a fixed subgroup, then an equation over $G$ could be also considered as an equation over $H.$
Equation~(\ref{eq:one_var_eq}) is {\it solvable in $G$} if there is an element $g \in G$ such that $w(g)=1$.
Equation~(\ref{eq:one_var_eq}) is {\it solvable over $G$} if there is an overgroup $H \geq G$ where this equation has a solution.

Let $\mathcal{C}$ be a class of groups.
An equation over a group $G \in \mathcal{C}$ is called {\it solvable in the class $\mathcal{C}$} if there is a group $H \in \mathcal{C}$, containing $G$, where this equation has a solution.

Equation~(\ref{eq:one_var_eq}) is called {\it regular} if its {\it exponent} $\epsilon = \sum_{i=1}^n \epsilon_i$ is nonzero.
According to the famous {\it Kervaire-Laudenbach conjecture} (see~\cite{romankov}) every regular equation is solvable over an arbitrary group.
In~\cite{romankov} similar conjectures were proposed for the classes of nilpotent and solvable groups.
The principal question for the nilpotent version of this conjecture is solvability of regular equations in the class $\mathcal{F}_p$ of all finite $p$-groups, for all prime $p$.
Observe that, in general, Kervaire-Laudenbach conjecture (and its generalization for systems of equations) is still opened.
For existing partial result we refer to the survey~\cite{romankov}.

It follows by the results of M.~Gerstenhaber and O.~Rothaus~\cite{gerstenhaber_rothaus} that any regular equation over a finite group is solvable in a finite overgroup.
We remark that existing proofs of these results are not constructive.
They don't allow to construct an overgroup where the equation is solvable.
Also they don't allow to describe the structure of an overgroup.
Other constructive proofs for these results are still not found.
Unfortunately nothing could be said about solvability of equations in finite overgroups from some class of groups.

An equation over a finite $p$-group $G$ is called {\it $p$-solvable} if there is a finite $p$-overgroup $H \geq G$ where it has a solution.

Let $\mathbb{F}_p$ be the prime finite field of order $p$ and UT$_n(\mathbb{F}_p)$ ($n \geq 2$) be the group of $n \times n$ upper unitriangular matrices over  $\mathbb{F}_p$.
It is well known that any finite $p$-group $G$ is isomorphic to a subgroup of UT$_{|G|}(\mathbb{F}_p)$.
Hence to prove solvability of regular equations in the class $\mathcal{F}_p$ of all finite $p$-groups it is enough to show that any regular equation over UT$_n(\mathbb{F}_p)$, for $n \geq 2$, is $p$-solvable.
Observe that the case $n=2$ corresponds to adjunction of roots to cyclic groups of order $p$ in the class $\mathcal{F}_p$ and thus is trivial.
So, the first nontrivial case corresponds to $n=3$.

In~\cite{menshov_romankov_reg_eq} the authors proved $p$-solvability of some regular equations over the Heisenberg $p$-group UT$_3(\mathbb{F}_p)$.
In this paper we extend the results of~\cite{menshov_romankov_reg_eq}.
In Theorem~\ref{th:main} we prove that some regular equations of exponent $rp^s$, where $r \in \mathbb{Z}$, $s \in \mathbb{N}$, $\gcd(r,p)=1$, over the group UT$_n(\mathbb{F}_p)$ ($n \geq 2$) are solvable in an overgroup isomorphic to UT$_{(n-1)p^s+1}(\mathbb{F}_p)$.
Applying this result for $n=3$ we prove in Theorem~\ref{th:UT_3} that any regular equation of exponent $rp^s$ over the Heisenberg $p$-group UT$_3(\mathbb{F}_p)$ is solvable in an overgroup isomorphic to UT$_{2p^s+1}(\mathbb{F}_p)$.
The proofs of these results are constructive and allow to obtain solutions of equations in explicit forms.

\section{Preliminaries}
\label{sec:preliminaries}

Let $\Delta = \{ \delta_1, \delta_2, \dots, \delta_n \}$ be rational numbers where $\delta_i < \delta_{i+1}$, for $i=1,\dots,n-1$.
Write $\mathfrak{A}$ for the associative algebra over a field $\mathbb{F}$ with basis $\{ e_{\delta_i,\delta_{i+1}} \mid i=1,\dots,n-1 \}$, such that the multiplication in $\mathfrak{A}$ is defined by

\[
e_{\alpha,\beta} \cdot e_{\beta,\gamma} = e_{\alpha,\gamma}, \quad
e_{\alpha,\beta} \cdot e_{\gamma,\delta} = 0 \textrm{ for } \beta \neq \gamma .
\]
Observe that  $\mathfrak{A}$ is isomorphic to the algebra NT$_n(\mathbb{F})$ of all $n \times n$ upper nil-triangular matrices over  $\mathbb{F}$.
For an element
\begin{equation}
\label{eq:algebra_element_form}
u = \sum_{ \substack{ \alpha , \beta \in \Delta, \\ \alpha < \beta }} u_{ \alpha , \beta } e_{ \alpha , \beta } \in \mathfrak{A}, \quad u_{ \alpha , \beta } \in \mathbb{F},
\end{equation}
denote
\[
\mathrm{supp}(u) = \{ (\alpha,\beta) \in \Delta^2 \mid u_{\alpha,\beta} \neq 0 \}.
\]

\noindent
With  $u$ we associate a weighted oriented graph $\Gamma(u)$ with vertices corresponding to rational numbers from $\Delta$.
Vertices $\alpha,\beta \in \Delta$ are connected with the edge $(\alpha,\beta)$ if and only if $(\alpha,\beta) \in$ supp$(u)$.
We set $u_{\alpha,\beta}$ from (\ref{eq:algebra_element_form}) as the weight of  $(\alpha,\beta).$
Let $\mathcal{P}$ be a $\gamma\delta$-path ($\gamma < \delta$) in $\Gamma (u)$, i.e., a path from $\gamma$ to $\delta$.
The weight $w(\mathcal{P})$ of $\mathcal{P}$ is defined as the product of the weights of all edges forming the path.
The length $|\mathcal{P}|$ is the number of its edges.
We will say that  element $u \in \mathfrak{A}$ has the length $l = l(u)$ if the maximal length of a path in $\Gamma(u)$ is equal to $l$.
It is clear that $u^{l+1} = 0.$
The length $l(\mathfrak{M})$ of an arbitrary subset $\mathfrak{M} \subseteq \mathfrak{A}$ is defined by $\sup \{ l(u) \mid u \in \mathfrak{M} \}$.
Observe that $l(\mathfrak{A})=n-1$.

Add to $\mathfrak{A}$ the external unit $1$.
Then the set G$(\mathfrak{A}) = \{ 1+u \mid u \in \mathfrak{A} \}$ is a group since
\begin{align*}
(1+u)(1+v) &= 1 + (u+v+uv), \\
(1+u)^{-1} &= 1 + \sum_{i=1}^{n-1} (-1)^i u^i.
\end{align*}
By $t_{\alpha,\beta}$ ($\alpha,\beta \in \Delta, \; \alpha < \beta$) we denote a {\it transvection} $1 + e_{\alpha,\beta}$.
Also for any $\gamma \in \mathbb{F}$ we denote $t_{\alpha,\beta }(\gamma) = 1 + \gamma e_{\alpha,\beta}.$
Elements of the form $e_{\alpha,\beta}$ will be called {\it matrix units}.
Denote by $\mathfrak{A}^{(i)}$ the set of all sums of products of $i$ elements of $\mathfrak{A}$, for $i > 0$.
Clearly $\mathfrak{A}^{(i)}$ is a subalgebra of $\mathfrak{A}$ and G$(\mathfrak{A}^{(i)}) = \{ 1 + u \mid u \in \mathfrak{A}^{(i)} \}$ is a subgroup of G$(\mathfrak{A})$.

\begin{lemma}
\label{lm:algebra_group_relation}
The following statements hold:
\mbox{}
\begin{enumerate}
\item[1)]  $\mathfrak{A}^{(n)}=0,$ hence $\mathfrak{A}$ is nilpotent of class $n.$
\item[2)] The series $1 = G(\mathfrak{A}^{(n)}) \leq G(\mathfrak{A}^{(n-1)}) \leq \dots \leq G(\mathfrak{A}^{(1)}) = G(\mathfrak{A})$ is the lower (and upper) central series of G$(\mathfrak{A})$.
\item[3)] The group G$(\mathfrak{A})$ is nilpotent of class $n-1.$
\end{enumerate}
\end{lemma}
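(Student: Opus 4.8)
The plan is to derive all three parts from the single multiplication rule for matrix units, reading off a convenient filtration along the way. For part~1), I would note that $\mathfrak{A}^{(i)}$ is spanned by products $e_{\alpha_0,\alpha_1}e_{\alpha_1,\alpha_2}\cdots e_{\alpha_{i-1},\alpha_i}$ of $i$ matrix units, and such a product is nonzero only when $\alpha_0<\alpha_1<\dots<\alpha_i$ is a strictly increasing chain in $\Delta$, in which case it equals $e_{\alpha_0,\alpha_i}$. A chain with $i$ edges uses $i+1$ distinct elements of $\Delta$; since $|\Delta|=n$ this forces $i\le n-1$, so every product of $n$ matrix units vanishes and $\mathfrak{A}^{(n)}=0$, while $e_{\delta_1,\delta_2}e_{\delta_2,\delta_3}\cdots e_{\delta_{n-1},\delta_n}=e_{\delta_1,\delta_n}\neq 0$ shows $\mathfrak{A}^{(n-1)}\neq 0$, giving index of nilpotency exactly $n$. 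I record here the fact used repeatedly below: writing $d(\delta_a,\delta_b)=b-a$, a matrix unit $e_{\delta_a,\delta_b}$ lies in $\mathfrak{A}^{(i)}\setminus\mathfrak{A}^{(i+1)}$ precisely when $d(\delta_a,\delta_b)=i$, so $\mathfrak{A}^{(i)}$ is spanned by the matrix units of distance $\ge i$.

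The engine for part~2) is the commutator identity in $G(\mathfrak{A})$. For $x=1+u$ and $y=1+v$ one has $yx\,[x,y]=xy$, whence
\[
[x,y]-1=(1+u+v+vu)^{-1}(uv-vu).
\]
Since $(1+u+v+vu)^{-1}=1+(\text{element of }\mathfrak{A})$ and left multiplication by $\mathfrak{A}$ raises the distance filtration by one, the lowest term of $[x,y]-1$ coincides with that of $uv-vu$; if $u\in\mathfrak{A}^{(i)}$ and $v\in\mathfrak{A}^{(j)}$ then $uv-vu\in\mathfrak{A}^{(i+j)}$, so $[G(\mathfrak{A}^{(i)}),G(\mathfrak{A}^{(j)})]\subseteq G(\mathfrak{A}^{(i+j)})$. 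In particular $[G(\mathfrak{A}^{(i)}),G(\mathfrak{A})]\subseteq G(\mathfrak{A}^{(i+1)})$, so the displayed series is central. For the reverse inclusion I would use that $G(\mathfrak{A}^{(i)})/G(\mathfrak{A}^{(i+1)})$ is isomorphic to the additive group $\mathfrak{A}^{(i)}/\mathfrak{A}^{(i+1)}$ (because $uv\in\mathfrak{A}^{(2i)}\subseteq\mathfrak{A}^{(i+1)}$), and that splitting a distance-$(i+1)$ unit as $e_{\alpha,\beta}=e_{\alpha,\mu}e_{\mu,\beta}$ with $e_{\alpha,\mu}\in\mathfrak{A}^{(i)}$ gives $[1+c\,e_{\alpha,\mu},\,1+e_{\mu,\beta}]\equiv 1+c\,e_{\alpha,\beta}\pmod{G(\mathfrak{A}^{(i+2)})}$. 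These commutators generate $G(\mathfrak{A}^{(i+1)})/G(\mathfrak{A}^{(i+2)})$, so $[G(\mathfrak{A}^{(i)}),G(\mathfrak{A})]\cdot G(\mathfrak{A}^{(i+2)})=G(\mathfrak{A}^{(i+1)})$; a downward induction on $i$ anchored at $G(\mathfrak{A}^{(n)})=1$ then absorbs the tail and yields the equality $[G(\mathfrak{A}^{(i)}),G(\mathfrak{A})]=G(\mathfrak{A}^{(i+1)})$, whence $\gamma_i(G(\mathfrak{A}))=G(\mathfrak{A}^{(i)})$ term by term.

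To see the same series is the upper central series I would show $Z(G(\mathfrak{A})/G(\mathfrak{A}^{(j+1)}))=G(\mathfrak{A}^{(j)})/G(\mathfrak{A}^{(j+1)})$ for each $j$; the inclusion $\supseteq$ is the centrality just proved, and for $\subseteq$ I argue contrapositively: if $w\notin\mathfrak{A}^{(j)}$, choose a nonzero coefficient $w_{\alpha,\beta}$ with $d(\alpha,\beta)=m<j$ and multiply by a matrix unit extending to whichever side has room (possible since $j\le n-1$) so that the resulting term in $wv-vw$ has distance exactly $j$; checking that no other term cancels it shows $[1+v,1+w]\notin G(\mathfrak{A}^{(j+1)})$, so $1+w$ is not central modulo $G(\mathfrak{A}^{(j+1)})$. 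Reindexing gives $Z_k(G(\mathfrak{A}))=G(\mathfrak{A}^{(n-k)})$, so the series is both central series at once, and part~3) is then immediate: the lower central series reaches $1=G(\mathfrak{A}^{(n)})$ at step $n$ while $\gamma_{n-1}=G(\mathfrak{A}^{(n-1)})\ni e_{\delta_1,\delta_n}\neq 1$, so the nilpotency class is $n-1$. I expect part~1) and the commutator identity to be painless; the genuine work is the two exhaustion arguments — showing the commutators fill out all of $G(\mathfrak{A}^{(i+1)})$ rather than a proper subgroup, and showing nothing beyond $G(\mathfrak{A}^{(j)})$ is central — both of which hinge on the explicit distance bookkeeping for matrix units, where I would be most careful.
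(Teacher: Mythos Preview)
The paper states this lemma without proof, treating it as a standard fact about $\mathrm{UT}_n(\mathbb{F})$ (indeed, immediately after the lemma the paper simply remarks that $G(\mathfrak{A})\cong \mathrm{UT}_n(\mathbb{F})$ and that $G(\mathfrak{A}^{(i)})$ corresponds to matrices with the first $i-1$ superdiagonals equal to zero). So there is no authors' proof to compare against.

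Your argument is a correct direct verification. The distance filtration on matrix units, the commutator identity $[1+u,1+v]-1=(1+v+u+vu)^{-1}(uv-vu)$, and the splitting $e_{\alpha,\beta}=e_{\alpha,\mu}e_{\mu,\beta}$ are exactly the right tools, and your downward induction to pass from $[G(\mathfrak{A}^{(i)}),G(\mathfrak{A})]\cdot G(\mathfrak{A}^{(i+2)})=G(\mathfrak{A}^{(i+1)})$ to equality is the standard trick. One small point of care in the upper-central-series step: when you pick a nonzero coefficient $w_{\alpha,\beta}$ with $d(\alpha,\beta)=m<j$, the non-cancellation check is cleanest if you take $m$ \emph{minimal} among all distances occurring in $w$ and use a matrix unit $v$ of distance~$1$; then $wv-vw\in\mathfrak{A}^{(m+1)}\setminus\mathfrak{A}^{(m+2)}$ and the higher-order corrections lie in $\mathfrak{A}^{(m+2)}$, so the commutator visibly falls outside $G(\mathfrak{A}^{(j+1)})$. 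Your version with $v$ of distance $j-m$ also works, but verifying ``no other term cancels it'' then requires tracking several cross-terms, so recording the minimal-$m$ choice makes the write-up cleaner.
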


Observe that G$(\mathfrak{A})$ is isomorphic to the group UT$_n(\mathbb{F})$ of $n \times n$ upper unitriangular matrices over a field $\mathbb{F}$.
The group G$(\mathfrak{A}^{(i)})$ consists of matrices whose first $i-1$ superdiagonals are zero.

Further we will take $\mathbb{F}$ to be the prime finite field $\mathbb{F}_p$ of order $p$.

In~\cite{menshov_romankov} for $q=p^s$, $s \in \mathbb{Z}^+$, $n \geq 2$, the authors constructed embeddings of UT$_n(\mathbb{F}_p)$ in UT$_m(\mathbb{F}_p)$, where $m=(n-1)q+1$, such that any element of UT$_n(\mathbb{F}_p)$ has a $q$-th root in UT$_m(\mathbb{F}_p)$.
Since these embeddings play essential role further, we will describe them in details.

Let $\alpha_{i,j} \in \mathbb{Q}$ be such that
\[
i < \alpha_{i,1} < \dots < \alpha_{i,q-1} < i+1, \qquad i=1,\dots,n-1,
\]
and let UT$_m(\mathbb{F}_p)$ be generated by
\[
t_{i, \alpha_{i,1}}', t_{\alpha_{i,1}, \alpha_{i,2}}', \dots, t_{\alpha_{i,q-1}, i+1}', \qquad i=1,\dots,n-1.
\]
Consider the embedding $\phi:$ UT$_n(\mathbb{F}_p) \to$ UT$_m(\mathbb{F}_p)$ defined by
\begin{equation}
\label{eq:UT_embedding_1}
\phi: \begin{array}{rcl}
    t_{1,2} & \mapsto & t_{1,2}', \\
    t_{2,3} & \mapsto & t_{2,3}' t_{\alpha_{1,1}, \alpha_{2,1}}' t_{\alpha_{1,2}, \alpha_{2,2}}' \dots t_{\alpha_{1,q-1}, \alpha_{2,q-1}}', \\
    t_{3,4} & \mapsto & t_{3,4}' t_{\alpha_{2,1}, \alpha_{3,1}}' t_{\alpha_{2,2}, \alpha_{3,2}}' \dots t_{\alpha_{2,q-1}, \alpha_{3,q-1}}', \\
    & \dots & \\
    t_{n-1,n} & \mapsto & t_{n-1,n}' t_{\alpha_{n-2,1}, \alpha_{n-1,1}}' t_{\alpha_{n-2,2}, \alpha_{n-1,2}}' \dots t_{\alpha_{n-2,q-1}, \alpha_{n-1,q-1}}'.
\end{array}
\end{equation}

\begin{example}
Let $n=p=q=3$, then the image of
\[
a = \left( \begin{smallmatrix}
1 & a_{12} & a_{13} \\
0 & 1      & a_{23} \\
0 & 0      & 1
\end{smallmatrix} \right) \in \mathrm{UT}_3(\mathbb{F}_3)
\]
under embedding~(\ref{eq:UT_embedding_1}) is equal to
\[
\phi(a) = \left( \begin{smallmatrix}
1 & 0 & 0 & a_{12} & 0 & 0 & a_{13} \\
0 & 1 & 0 & 0 & a_{23} & 0 & 0 \\
0 & 0 & 1 & 0 & 0 & a_{23} & 0 \\
0 & 0 & 0 & 1 & 0 & 0 & a_{23} \\
0 & 0 & 0 & 0 & 1 & 0 & 0 \\
0 & 0 & 0 & 0 & 0 & 1 & 0 \\
0 & 0 & 0 & 0 & 0 & 0 & 1
\end{smallmatrix} \right) \in \mathrm{UT}_7(\mathbb{F}_3)
\]
\end{example}

Similarly one can define the embedding $\psi:$ UT$_n(\mathbb{F}_p) \to$ UT$_m(\mathbb{F}_p)$ by
\begin{equation}
\label{eq:UT_embedding_2}
\psi: \begin{array}{rcl}

    t_{1,2} & \mapsto & t_{1,2}' t_{\alpha_{1,1}, \alpha_{2,1}}' t_{\alpha_{1,2}, \alpha_{2,2}}' \dots t_{\alpha_{1,q-1}, \alpha_{2,q-1}}', \\
    t_{2,3} & \mapsto & t_{2,3}' t_{\alpha_{2,1}, \alpha_{3,1}}' t_{\alpha_{2,2}, \alpha_{3,2}}' \dots t_{\alpha_{2,q-1}, \alpha_{3,q-1}}', \\
    & \dots & \\
    t_{n-2,n-1} & \mapsto & t_{n-2,n-1}' t_{\alpha_{n-2,1}, \alpha_{n-1,1}}' t_{\alpha_{n-2,2}, \alpha_{n-1,2}}' \dots t_{\alpha_{n-2,q-1}, \alpha_{n-1,q-1}}', \\
    t_{n-1,n} & \mapsto & t_{n-1,n}'.
\end{array}
\end{equation}

\begin{example}
Let $n=p=q=3$, then the image of
\[
a = \left( \begin{smallmatrix}
1 & a_{12} & a_{13} \\
0 & 1      & a_{23} \\
0 & 0      & 1
\end{smallmatrix} \right) \in \mathrm{UT}_3(\mathbb{F}_3)
\]
under embedding~(\ref{eq:UT_embedding_2}) is equal to
\[
\psi(a) = \left( \begin{smallmatrix}
1 & 0 & 0 & a_{12} & 0 & 0 & a_{13} \\
0 & 1 & 0 & 0 & a_{12} & 0 & 0 \\
0 & 0 & 1 & 0 & 0 & a_{12} & 0 \\
0 & 0 & 0 & 1 & 0 & 0 & a_{23} \\
0 & 0 & 0 & 0 & 1 & 0 & 0 \\
0 & 0 & 0 & 0 & 0 & 1 & 0 \\
0 & 0 & 0 & 0 & 0 & 0 & 1
\end{smallmatrix} \right) \in \mathrm{UT}_7(\mathbb{F}_3)
\]
\end{example}

\section{Main results}
\label{sec:main}

Consider an equation over a group $G$ of the form
\[
\underbrace{ x^{\epsilon_1} g_1 x^{\epsilon_2} g_2 \dots x^{\epsilon_n} g_n }_{u(x)} = 1
\]
with exponent $\epsilon$.
Without the loss of generality we suppose that $\epsilon < 0$.
Using the identity $fg=gf[f,g]$ and moving the powers of unknown and coefficients of equation to the left we obtain an equation of the form
\begin{equation}\label{eq:one_var_eq_modified}
x^{-\epsilon} = u(1) v(x),
\end{equation}
where $v(x) = \prod\limits_{i=1}^l C_i(x)$ ($l \geq 0$) and
\begin{equation}\label{eq:commutator_form}
C_i(x) = [u_i, x^{\sigma_i}, S_{i,1}(x), \dots, S_{i,k_i}(x)]
\end{equation}
is a commutator of weight $k_i + 2$, for $k_i \geq 0$, $u_i \in G$, $\sigma_i = \pm 1$, $S_{i,j}(x)=g_{ij} \in G$ or $S_{i,j}(x)=x^{\tau_{ij}}$, $\tau_{ij}=\pm 1$.
Observe that $u(1)=g_1 g_2 \dots g_n$ and exponent of $v(x)$ is equal to $0$.

\begin{example}
We will provide the sequence of transformations reducing the equation $xg_1xg_2=1$ to the form~(\ref{eq:one_var_eq_modified}):
\begin{align*}
1 &= xg_1xg_2, \\
1 &= xxg_1[g_1,x]g_2, \\
1 &= xxg_1g_2[g_1,x][g_1,x,g_2], \\
x^{-2} &= g_1g_2[g_1,x][g_1,x,g_2], \\
x^2 &= g_1g_2[g_1,x^{-1}][g_1,x^{-1},g_2].
\end{align*}

\end{example}

We will prove several technical lemmas first.

\begin{lemma}
\label{lm:power}
Let $\delta_1 < \dots < \delta_n$ be rational numbers and $\mathfrak{A}$ be the associative algebra over the field $\mathbb{F}_p$, generated by $e_{\delta_i,\delta_{i+1}},$ for $i=1,\dots,n-1$.
Let $x=1+u \in$ G$(\mathfrak{A})$ and $s \in \mathbb{Z}^+$ then
\[
x^{p^s} = 1 + u^{p^s} = 1 + \sum_{ \substack{ \mathcal{P} - \alpha\beta-path, \\ |\mathcal{P}|=p^s }} w(\mathcal{P}) e_{\alpha,\beta},
\]
where the sum is over all paths $\mathcal{P}$ of length $p^s$ in $\Gamma(u)$.
\end{lemma}

\begin{remark}
\label{re:power}
In notations of Lemma~\ref{lm:power} let us add to $x$ the element $w e_{\gamma,\delta} \in \mathfrak{A}$, $w \in \mathbb{F}_p$.
It will correspond to addition of the edge $(\gamma,\delta)$ with weight $w$ between the vertices $\gamma$ and $\delta$ in $\Gamma(u)$.
Clearly all previous paths of length $p^s$ will remain in $\Gamma(u)$ and additional paths passing through $(\gamma,\delta)$ may appear.
\end{remark}

\begin{lemma}
\label{lm:commutator}
Let $\delta_1 < \dots < \delta_n$ be rational numbers and $\mathfrak{A}$ be the associative algebra over the field $\mathbb{F}_p$, generated by $e_{\delta_i,\delta_{i+1}},$ for $i=1,\dots,n-1$.
Let
\[
C(x) = [g,x^{\sigma},S_1(x),\dots,S_k(x)]
\]
be a commutator of the form~(\ref{eq:commutator_form}) over G$(\mathfrak{A})$, where $g \in G(\mathfrak{A}^{(r)})$.
If $d \in \mathfrak{A}^{(t)}$ then
\[
C(x+d) = C(x) + \mathcal{E},
\]
where $\mathcal{E} \in \mathfrak{A}^{(r+t)}$.
\end{lemma}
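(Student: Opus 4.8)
The plan is to induct on the number $k$ of outer entries $S_1(x),\dots,S_k(x)$, with the whole argument driven by the elementary identity
\[
[a,b]-1=a^{-1}b^{-1}(\alpha\beta-\beta\alpha),\qquad a=1+\alpha,\ b=1+\beta,
\]
valid in $G(\mathfrak{A})$, which rewrites a group commutator through the \emph{ring} commutator $\alpha\beta-\beta\alpha$ of the augmentation parts. The decisive feature is that $\alpha\beta-\beta\alpha$ is bilinear, so an additive perturbation of one argument produces a clean additive error, and the grading rules $\mathfrak{A}^{(i)}\mathfrak{A}^{(j)}\subseteq\mathfrak{A}^{(i+j)}$ and $\mathfrak{A}^{(i+1)}\subseteq\mathfrak{A}^{(i)}$ turn weight counts directly into membership statements.

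Before the induction I would record two perturbation facts. First, left or right multiplication by any element of $G(\mathfrak{A})=1+\mathfrak{A}$ preserves membership in every $\mathfrak{A}^{(i)}$. Second, replacing $x=1+u$ by $x+d=1+(u+d)$ with $d\in\mathfrak{A}^{(t)}$ perturbs each power $x^{\pm1}$ within $\mathfrak{A}^{(t)}$; for the inverse this is immediate from $(x+d)^{-1}-x^{-1}=-(x+d)^{-1}d\,x^{-1}\in\mathfrak{A}^{(t)}$. Hence each factor $S_j(x)$ — whether a constant (perturbation $0$) or a power $x^{\pm1}$ (perturbation in $\mathfrak{A}^{(t)}$) — and the factor $x^{\sigma}$ are all perturbed within $\mathfrak{A}^{(t)}$.

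For the induction I would write $C(x)=[D(x),S_k(x)]$, where $D(x)$ is either $g$ itself (base case $k=0$) or the shorter commutator $[g,x^{\sigma},S_1,\dots,S_{k-1}]$. Writing $D(x)=1+\delta$ with $\delta\in\mathfrak{A}^{(r)}$ and $S_k(x)=1+\zeta$ with $\zeta\in\mathfrak{A}^{(1)}$, the induction hypothesis gives $D(x+d)=D(x)+\mathcal{E}_D$ with $\mathcal{E}_D\in\mathfrak{A}^{(r+t)}$, and crucially with the \emph{same} $r$, since $D$ carries the same innermost element $g\in G(\mathfrak{A}^{(r)})$. Applying the identity to $[D,S_k]$ and subtracting, $C(x+d)-C(x)$ splits into two kinds of summands: those carrying a fresh ring commutator involving either $\mathcal{E}_D$ or the $\mathfrak{A}^{(t)}$-perturbation $\eta$ of $S_k$ (each such bracket lands in $\mathfrak{A}^{(r+t)}$ by the product rule), and one term equal to the perturbation of the unit prefactor $D^{-1}S_k^{-1}$, which lies in $\mathfrak{A}^{(t)}$, multiplied against the old bracket $\delta\zeta-\zeta\delta\in\mathfrak{A}^{(r+1)}$, hence in $\mathfrak{A}^{(r+t+1)}\subseteq\mathfrak{A}^{(r+t)}$. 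Every summand lies in $\mathfrak{A}^{(r+t)}$, which gives $\mathcal{E}=C(x+d)-C(x)\in\mathfrak{A}^{(r+t)}$ and closes the induction.

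The one point demanding care — and the only real obstacle — is the bookkeeping that keeps the fixed weight $r$ of $g$ attached to the lowest-weight part through every bracket, while verifying that none of the unit prefactors $g^{-1}$, $(x^{\sigma})^{-1}$, $D^{-1}$, $S_k^{-1}$ or their perturbations ever lowers the grading. The conceptual reason the induction closes is that the target exponent $r+t$ depends only on $g$ and $d$, and not on the length $k$ of the commutator, so the induction hypothesis applies verbatim with the same $r$.
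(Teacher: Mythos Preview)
Your proposal is correct and follows essentially the same inductive strategy as the paper: induct on the commutator length, reduce the group commutator to the ring commutator $\alpha\beta-\beta\alpha$, and track the gradings of the perturbation terms. The only difference is cosmetic: you package the computation via the identity $[a,b]-1=a^{-1}b^{-1}(\alpha\beta-\beta\alpha)$ and the observation that multiplication by units of $G(\mathfrak{A})$ preserves each $\mathfrak{A}^{(i)}$, whereas the paper expands $(1+v)^{-1}$ as a finite geometric series and works with the explicit sums; both arrive at the same weight estimates $u'-v'\in\mathfrak{A}^{(r+t)}$ and $(\text{prefactor perturbation})\cdot(u-v)\in\mathfrak{A}^{(r+t+1)}$.
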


\begin{proof}
We will split the proof into several steps.

{\bf Step 1.}
Take $1+a \in$ G$(\mathfrak{A}^{(s)})$ and $1+b \in$ G$(\mathfrak{A}^{(w)})$ then
\begin{align*}
[1+a,1+b]
    &= ((1+b)(1+a))^{-1} (1+a)(1+b) \\
    &= (1+ \underbrace{a+b+ba}_v)^{-1} (1+ \underbrace{a+b+ab}_u)\\
    &= (1 - v + v^2 - \dots + (-1)^{n-1}v^{n-1}) (1+u) \\
    &= 1 + (1 - v + v^2 - \dots + (-1)^{n-2}v^{n-2})(u-v).
\end{align*}
Since $u-v = ab - ba \in \mathfrak{A}^{(s+w)}$ then $[1+a,1+b] \in G(\mathfrak{A}^{(s+w)})$.
Hence $[g,x^{\sigma}] \in$ G$(\mathfrak{A}^{(r+1)})$.

{\bf Step 2.}
We will prove that $[g,(x+d)^{\sigma}]=[g,x^{\sigma}]+\mathcal{E}$ where $\mathcal{E} \in \mathfrak{A}^{(r+t)}$.
Take $\sigma=1$, $g=1+a$, $x=1+b$, and $d \in \mathfrak{A}^{(t)}$ then
\begin{align*}
[1+a,1+b+d] &= ((1+b+d)(1+a))^{-1} (1+a)(1+b+d) \\
    &= (1 + \underbrace{a+b+ba}_v + \underbrace{d+da}_{v'})^{-1} (1 + \underbrace{a+b+ab}_u + \underbrace{d+ad}_{u'}) \\
    &= 1 + \left( 1 + \sum_{i=1}^{n-2} (-1)^i (v+v')^i \right) ((u+u')-(v+v')).
\end{align*}
Observe that
$
\left( 1 + \sum\limits_{i=1}^{n-2} (-1)^i (v+v')^i \right) = \left( 1 + \sum\limits_{i=1}^{n-2} (-1)^i v^i \right) + f(v,v'),
$
where $f(v,v')$ is the sum of products formed by $v$ and $v'$ such that each product contains $v'$.
Then
\begin{align}\label{eq:long}
&(1+v+v')^{-1}(1+u+u') \\ \nonumber
    &= 1 + \left( \left( 1 + \sum_{i=1}^{n-2} (-1)^i v^i \right) + f(v,v') \right) ((u-v) + (u'-v')) \\ \nonumber
    &= 1 + \left( 1 + \sum_{i=1}^{n-2} (-1)^i v^i \right) (u-v) \\ \nonumber
        &\qquad + f(v,v')(u-v) \\ \nonumber
        &\qquad + \left( \left( 1 + \sum_{i=1}^{n-2} (-1)^i v^i \right) + f(v,v') \right) (u'-v') \\ \nonumber
    &= (1+v)^{-1}(1+u) + \underbrace{f(v,v')(u-v) + \left( 1 + \sum_{i=1}^{n-2} (-1)^i (v+v')^i \right) (u'-v')}_{\mathcal{E}}.
\end{align}
Observe that $v',f(v,v') \in \mathfrak{A}^{(t)}$, $f(v,v')(u-v) \in \mathfrak{A}^{(r+t)}$ and $u'-v'=ad-da \in \mathfrak{A}^{(r+t)}$, hence $\mathcal{E} \in \mathfrak{A}^{(r+t)}$.
The case $\sigma=-1$ is similar since $(x+d)^{-1}=x^{-1}+d'$, for $d' \in \mathfrak{A}^{(t)}$.
This proves the lemma for commutators of weight $2$.

{\bf Step 3.}
Suppose that the lemma holds for commutators $C(x)$ of weight less or equal to $k$, for $k \geq 2$, i.e.,
\[
C(x+d)=C(x)+\mathcal{E},
\]
where $d \in \mathfrak{A}^{(t)}$, $\mathcal{E} \in \mathfrak{A}^{(r+t)}$, $C(x)=1+c \in$ G$(\mathfrak{A}^{(r+1)})$.
We will prove that the lemma holds for commutators of weight $k+1$.

If $f \in \mathfrak{A}$ then (similar to step $2$)
\[
[C(x)+\mathcal{E},1+f]=[C(x),1+f]+\mathcal{E}',
\]
where $\mathcal{E}' \in \mathfrak{A}^{(r+t+1)}$.

Further consider the commutator $[C(x)+\mathcal{E},(x+d)^{\sigma}]$ for $\sigma=1$ (the case $\sigma=-1$ is similar).
Take $x=1+b$ then
\begin{align*}
&[1+c+\mathcal{E},1+b+d] \\
    &= ((1+b+d)(1+c+\mathcal{E}))^{-1}(1+c+\mathcal{E})(1+b+d) \\
    &= (1+\underbrace{b+c+bc}_{v}+\underbrace{d+\mathcal{E}+b\mathcal{E}+dc+d\mathcal{E}}_{v'})^{-1} \\
        &\quad\;\, (1+\underbrace{b+c+cb}_{u}+\underbrace{d+\mathcal{E}+cd+\mathcal{E}b+\mathcal{E}d}_{u'}).
\end{align*}
Observe that $u-v=cb-bc \in \mathfrak{A}^{(r+2)}$, $v' \in \mathfrak{A}^{(t)}$ and $u'-v'=(cd-dc)+(\mathcal{E}b-b\mathcal{E})+(\mathcal{E}d-d\mathcal{E}) \in \mathfrak{A}^{(r+t)}$.
Then from~(\ref{eq:long}) if follows that
\[
[C(x)+\mathcal{E},x+d] = [C(x),x] + \mathcal{E}',
\]
where $\mathcal{E}' \in \mathfrak{A}^{(r+t)}$.
This proves the lemma for commutators of weight \linebreak $k+1$.
\end{proof}

The following lemma, in some cases, allows us to consider equations only of prime exponent $p^s$, $s \in \mathbb{Z}^+$, over a finite $p$-group.

\begin{lemma}
\label{lm:prime_exponent}
If any regular equation of exponent $p^s$, $s \in \mathbb{Z}^+$, over a finite $p$-group $G$ is solvable in a finite $p$-overgroup $H$ such that $|H| \leq f_G(p^s)$, where $f_G$ is a function of exponent $p^s$, depending on a group $G$, then any regular equation over $G$ is $p$-solvable.
\end{lemma}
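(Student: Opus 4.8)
The plan is to reduce a regular equation of arbitrary nonzero exponent $\epsilon = rp^s$ with $\gcd(r,p)=1$ to one of exponent exactly $p^s$, and then invoke the hypothesis. The decisive observation is that the factor $r$, being coprime to $p$, is invertible modulo every power of $p$; hence it can be absorbed by an explicit power substitution, \emph{provided} the exponent of the eventual $p$-overgroup is controlled in advance. This is exactly the role played by the uniform bound $f_G(p^s)$.

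First I would put the given regular equation into the normal form~(\ref{eq:one_var_eq_modified}), namely $x^{rp^s}=u(1)\,v(x)$, where $v(x)=\prod_i C_i(x)$ is a product of commutators of the form~(\ref{eq:commutator_form}) and has exponent $0$ in $x$; here I take $s\ge 1$, treating $s=0$ at the end. Since $B:=f_G(p^s)$ depends only on $G$ and $p^s$, I may fix in advance an integer $c$ with $p^{c}\ge B$ and then, using $\gcd(r,p)=1$, an integer $r^{*}$ with $r r^{*}\equiv 1 \pmod{p^{c}}$. With this $r^{*}$ I form the equation $E'$ in a new unknown $y$ obtained from the substitution $x\mapsto y^{r^{*}}$:
\[
y^{p^s}=u(1)\,v\!\left(y^{r^{*}}\right).
\]
Because $v$ has exponent $0$, the word $v(y^{r^{*}})$ again has exponent $0$ in $y$, so $E'$ is a regular equation over $G$ of exponent exactly $p^s$.

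Next I would apply the hypothesis to $E'$: it has a solution $y_0$ in a finite $p$-overgroup $H\ge G$ with $|H|\le B\le p^{c}$. As $H$ is a $p$-group of order at most $p^{c}$, every element of $H$ has order dividing $p^{c}$, so $y_0^{\,p^{c}}=1$ and therefore $y_0^{\,r r^{*}}=y_0$ by the choice of $r^{*}$. Setting $x_0=y_0^{\,r^{*}}\in H$ I then get $x_0^{\,r}=y_0^{\,r^{*}r}=y_0$, whence $x_0^{\,rp^s}=y_0^{\,p^s}$ and $v(x_0)=v(y_0^{\,r^{*}})$; substituting into the normal form shows that $x_0$ solves the original equation inside the finite $p$-group $H$, i.e.\ the equation is $p$-solvable. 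The point that must be argued carefully is precisely the absence of circularity: $r^{*}$ (equivalently $c$) is chosen \emph{before} $H$ is produced, which is legitimate only because $f_G(p^s)$ bounds the overgroup \emph{uniformly} over all exponent-$p^s$ equations over $G$. I expect this to be the main subtlety, together with checking that the substituted commutator word is genuinely an equation over $G$ of exponent $p^s$; the remaining manipulations are routine.

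Finally, for $s=0$ the exponent is $r$ with $\gcd(r,p)=1$, and the same substitution $x\mapsto y^{r^{*}}$ (now with $p^{c}\ge |G|$) turns the equation into one of exponent $p^{0}=1$. A regular equation of exponent $\pm 1$ over the nilpotent group $G$ is solvable in $G$ itself: writing it as the fixed-point form $x=u(1)\,v(x)$ and iterating $x^{(k+1)}=u(1)\,v(x^{(k)})$, each correction lies deeper in the lower central series of $G$ (this is the content of Lemma~\ref{lm:commutator}), so the process stabilizes after at most the nilpotency class of $G$ steps. The resulting solution already lies in the $p$-group $G\le G$, which completes the argument.
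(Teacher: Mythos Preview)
Your proof is correct and follows essentially the same route as the paper's: both fix in advance a power of $p$ exceeding $f_G(p^s)$ (the paper's $p^t$, your $p^c$), invert $r$ modulo that power (the paper writes $1=p^t\ell+rk$, and your $r^{*}$ is exactly this $k$), form the auxiliary exponent-$p^s$ equation $y^{p^s}=u(1)\,v(y^{k})$, solve it in $H$, and recover the solution of the original equation as the $k$-th power of $y_0$. Your emphasis on the non-circularity of choosing $r^{*}$ before $H$ is produced is precisely the point of the uniform bound $f_G(p^s)$.

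The only divergence is the $s=0$ case. The paper dispatches it in one line by citing Shmel'kin's theorem~\cite{shmelkin} (a regular equation whose exponent is coprime to $p$ over a finite $p$-group is already solvable in the group). You instead sketch the fixed-point iteration directly, which is fine in spirit, but your reference to Lemma~\ref{lm:commutator} is misplaced: that lemma is stated only for the nil-triangular algebra $\mathfrak{A}$ underlying $\mathrm{UT}_m(\mathbb{F}_p)$, not for an arbitrary finite $p$-group $G$ as in the present lemma. The descending-central-series argument you have in mind is correct, but it is Shmel'kin's theorem rather than Lemma~\ref{lm:commutator}.
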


\begin{proof}
Suppose that the exponent of the equation $u(x)=1$ over a group $G$ has the form $\epsilon=rp^s$, where $r \in \mathbb{Z}$, $s \in \mathbb{N}$, $\gcd(r,p)=1$.
If $s=0$ then, according to the theorem of A.~Shmel'kin~\cite{shmelkin}, this equation has a solution in $G$.
Furthe we assume that $s>0$.
Write the equation in the form~(\ref{eq:one_var_eq_modified}):
\[
x^{rp^s}=u(1)v(x).
\]
Take $t \in \mathbb{Z}^+$ such that $f_G(p^s) < p^t$.
Let $1=p^t l + r k$, for $l,k \in \mathbb{Z}$.
Making the substitution $y=x^r$, $x=y^k$ reduces the equation to the new one
\[
y^{p^s}=u(1)v(y^k)
\]
with exponent $p^s$.
Let $h \in H$ be a solution of the equation above, then $h^{p^t}=1$ and
\[
(h^k)^{p^sr} = (h^{rk})^{p^s} = (h^{p^t l} h^{rk})^{p^s} = h^{p^s} = u(1) v(h^k).
\]
Hence $h^k$ is a solution of the original equation in $H$.
\end{proof}

Suppose that an equation $u(x)=1$ over a group $G$ is given and $H$ is a homomorphic image of $G$.
{\it Replica} of this equation in $H$ is an equation over $H$ obtained from original by replacing all of its coefficients by its homomorphic images.

The following theorem allows us to solve a series of regular equations over unitriangular groups in some bigger unitriangular groups.

\begin{theorem}
\label{th:main}
Consider the regular equation $u(x)=1$ over the group UT$_n(\mathbb{F}_p)$ $(n \geq 2)$ with exponent $\epsilon=rp^s$, where $r \in \mathbb{Z}$, $s \in \mathbb{N}$, $\gcd(r,p)=1$.
Denote $u(1)=(a_{i,j})$.
If one of the following conditions hold
\begin{enumerate}
\item[1)] $a_{i,i+1} \neq 0$ for $i=2,\dots,n-1$,
\item[2)] $a_{i,i+1} \neq 0$ for $i=1,\dots,n-2$,
\item[3)] $u(1)$ is a central element in UT$_n(\mathbb{F}_p)$,
\end{enumerate}
then the equation in solvable in an overgroup isomorphic to UT$_m(\mathbb{F}_p)$, for $m=(n-1)p^s+1$.
\end{theorem}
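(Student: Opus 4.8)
The plan is to strip off the factor $r$, transport the problem into the nil-triangular algebra of the big group, and then solve the equation one superdiagonal at a time. First I would reduce to exponent exactly $q:=p^s$: the substitution of Lemma~\ref{lm:prime_exponent} (set $y=x^r$, $x=y^k$ with $1=p^tl+rk$ and $p^t$ a multiple of the exponent of $\mathrm{UT}_m(\mathbb{F}_p)$) turns the equation into $y^{q}=u(1)v(y^k)$ with the \emph{same} coefficients, and a solution $h$ of this one yields the solution $h^k$ of the original in the same overgroup. Next I would embed $\mathrm{UT}_n(\mathbb{F}_p)$ into $\mathrm{UT}_m(\mathbb{F}_p)$, $m=(n-1)q+1$, choosing $\phi$ or $\psi$ according to which hypothesis holds, write $x=1+X$ with $X\in\mathfrak{A}=\mathrm{NT}_m(\mathbb{F}_p)$, and use Lemma~\ref{lm:power} to rewrite the equation as $1+X^{q}=u(1)v(x)$, where $X^{q}$ is the sum of weights of all length-$q$ paths in $\Gamma(X)$. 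The embedding stretches distances by $q$, so the image of $\mathrm{UT}_n(\mathbb{F}_p)$ is supported on the superdiagonals $q,2q,\dots,(n-1)q$, and on the $q$-th superdiagonal $u(1)$ contributes exactly the entries $a_{i,i+1}$. Since each coefficient $u_i$ lies in $G(\mathfrak{A}^{(q)})$, Step~1 of Lemma~\ref{lm:commutator} gives $C_i(x)\in G(\mathfrak{A}^{(q+1)})$, hence $v(x)\in G(\mathfrak{A}^{(q+1)})$ contributes nothing on the $q$-th superdiagonal.

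For the base step I would match the $q$-th superdiagonal by a staircase $q$-th root, taking $X$ supported on the unit edges $v_t\to v_{t+1}$ of the subdivided path with weights $\eta_t$. Then the $(v_t,v_{t+q})$-entry of $X^{q}$ is the sliding-window product $\eta_t\eta_{t+1}\cdots\eta_{t+q-1}$, and matching these against the embedded entries of $u(1)$ amounts to prescribing, for each gap, the full product across the gap and, for each pair of adjacent gaps, the products across the straddling windows. This is a triangular system in the prefix and suffix products of consecutive gaps which I would solve by propagating from one end of the chain to the other; the propagation divides by the superdiagonal entries, so it goes through provided every $a_{i,i+1}$ used is invertible. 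Running it from the nonzero end toward the end that is permitted to vanish confines the single admissible zero to a boundary gap: embedding $\phi$ (propagation left to right) treats case~(2), $\psi$ (right to left) treats case~(1), and case~(3), where $u(1)$ is central and the $q$-th superdiagonal is empty, is the degenerate case solved by a direct choice of $X$.

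Next I would clear the higher superdiagonals by induction on $j=1,2,\dots,(n-2)q$, assuming $x$ already satisfies the equation modulo $G(\mathfrak{A}^{(q+j)})$. To kill the defect on the $(q+j)$-th superdiagonal I add to $X$ correction edges $d\in\mathfrak{A}^{(j+1)}$, i.e. edges spanning $j+1$ positions. By Remark~\ref{re:power} a length-$q$ path through such an edge lands on superdiagonal $q+j$ or higher, and its contribution is the edge weight times the product of the $q-1$ accompanying unit weights, so the defect is annihilated by a linear system whose coefficients are these window products. The decisive point is that, by Lemma~\ref{lm:commutator}, a perturbation $d\in\mathfrak{A}^{(j+1)}$ changes every $C_i(x)$ only inside $\mathfrak{A}^{(q+j+1)}$; hence it does not disturb $v(x)$ on the $(q+j)$-th superdiagonal nor anything already settled, the corrections are genuinely triangular, and by nilpotency of $\mathrm{UT}_m(\mathbb{F}_p)$ (Lemma~\ref{lm:algebra_group_relation}) the process stops after finitely many steps, delivering the required $x$.

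I expect the main obstacle to be the solvability of these correction systems once a superdiagonal entry is allowed to vanish: a zero entry $a_{i,i+1}=0$ forces a zero unit edge in the $i$-th gap, and a window product vanishes as soon as its $q-1$ unit edges meet that edge, so a target lying only on paths through the zero edge cannot be hit. The heart of the argument is to show that for every target the correction edge can be placed so that its accompanying unit edges avoid the single zero edge; this is possible precisely when the zero sits in a \emph{boundary} gap, and it is here that the availability of both embeddings (so that the admissible zero is pushed to whichever end the hypotheses allow) is essential. An interior zero — the configuration excluded by conditions (1)--(3) — cannot be routed around, which is exactly why the theorem is stated with these hypotheses and why for $n=3$ they already exhaust all cases.
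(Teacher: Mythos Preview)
Your plan is the paper's: strip the factor $r$ via Lemma~\ref{lm:prime_exponent}, embed into $\mathrm{UT}_m(\mathbb{F}_p)$, match the $q$-th superdiagonal with a staircase, and climb the remaining superdiagonals using Lemmas~\ref{lm:power}--\ref{lm:commutator}. The gap is that you have the embedding--case correspondence reversed, and this is not cosmetic. The paper pairs $\phi$ with case~(1) and $\psi$ with case~(2), because under $\phi$ the entry $a_{1,2}$ occupies a \emph{single} position $(1,2)$ on the $q$-th superdiagonal of the image, whereas every other $a_{i,i+1}$ occupies $q$ positions. The paper's explicit base element (weight $a_{k,k+1}$ on the first edge of gap $k$, weight $1$ elsewhere --- no propagation is needed) then places the possibly-zero weight on the very first unit edge; the length-$(q-1)$ tail of every correction path starts at index $\ge 3$ and misses it automatically, so the correction matrix is lower triangular with nonzero diagonal. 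Symmetrically $\psi$ isolates $a_{n-1,n}$ to the last edge.

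Under your swap (say $\psi$ with case~(1)) the $q$ windows that must equal $a_{1,2}=0$ overlap in exactly one common edge, namely $\eta_q$, so any staircase solution is forced to put its zero there --- an interior edge. Your claim that ``for every target the correction edge can be placed to avoid the zero'' is true but insufficient: different placements pollute different neighbouring targets, so you must argue that the full linear map from correction edges to superdiagonal-$(q{+}1)$ contributions is surjective, and it is not. For $n=3$ one checks that only the two columns $c\in\{q-1,q\}$ survive, and the rows $r=1,\dots,q-1$ are pairwise proportional (the ratio of the two nonzero entries equals $\eta_{q+1}/\eta_{q-1}$ independently of $r$), so the map has rank $2<q$ once $q\ge 3$ and a generic defect coming from $v(x)$ cannot be cancelled. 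The invariant that makes the induction work is not ``the zero sits in a boundary gap'' but ``the zero sits at a boundary \emph{edge}'', and arranging that is exactly what dictates which embedding goes with which case.
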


\begin{proof}
If $s=0$ then, as it has been already mentioned, according to the theorem of A.~Shmel'kin~\cite{shmelkin}, the equation has a solution in UT$_n(\mathbb{F}_p)$.
Further we assume that $s>0$.
Denote, for brevity, $q=p^s$ and consider the case when $r=1$.
Write the equation in the form~(\ref{eq:one_var_eq_modified})
\begin{equation}\label{eq:main_th}
x^q = u(1) v(x).
\end{equation}

\noindent
Let $\alpha_{i,j} \in \mathbb{Q}$ be such that
\[
i < \alpha_{i,1} < \dots < \alpha_{i,q-1} < i+1, \qquad i=1,\dots,n-1,
\]
and let UT$_m(\mathbb{F}_p)$ be generated by
\[
t_{i, \alpha_{i,1}}', t_{\alpha_{i,1}, \alpha_{i,2}}', \dots, t_{\alpha_{i,q-1}, i+1}',
\quad i=1,\dots,n-1.
\]
By $\mathfrak{A}$ we denote the associative algebra, corresponding to UT$_m(\mathbb{F}_p)$, generated by the elements
\begin{equation}
\label{eq:algebra_generators}
e_{i, \alpha_{i,1}}', e_{\alpha_{i,1}, \alpha_{i,2}}', \dots, e_{\alpha_{i,q-1}, i+1}',
\quad i=1,\dots,n-1.
\end{equation}
Further we treat UT$_n(\mathbb{F}_p)$ as the subgroup of UT$_m(\mathbb{F}_p)$ with respect to embedding~(\ref{eq:UT_embedding_1}).

{\bf Case 1}.
Denote $G=$ UT$_n(\mathbb{F}_p)$ and $H=$ UT$_m(\mathbb{F}_p)$.
From properties of~(\ref{eq:UT_embedding_1}) it follows that $\gamma_i G \subseteq \gamma_{iq} H$, for $i=1,\dots,n-1$.
We will iteratively consider replicas of the equation~(\ref{eq:main_th}) in the factors $H / \gamma_i H$, for $i=2,\dots,m$.
During this process we will build the elements $x_i = 1 + u_i \in H$ such that the image of $x_i$ is a solution of the corresponding replica in $H / \gamma_i H$.
Finally we will obtain $x_m \in H$~--- a solution of~(\ref{eq:main_th}).
Observe that the cases $i=2,\dots,q$ could be skipped, since the images of $x_i=1$ are solutions of the corresponding replicas.

Solution of~(\ref{eq:main_th}) in the factor $H / \gamma_{q+1} H$  is the image of the following element
\begin{align*}
x_{q+1} = 1 &+ a_{1,2} e_{1,\alpha_{1,1}}' + e_{\alpha_{1,1},\alpha_{1,2}}' + \dots + e_{\alpha_{1,q-1},2}' \\
        &+ a_{2,3} e_{2,\alpha_{2,1}}' + e_{\alpha_{2,1},\alpha_{2,2}}' + \dots + e_{\alpha_{2,q-1},3}' \\
        &+ \dots \\
        &+ a_{n-1,n} e_{n-1,\alpha_{n-1,1}}' + e_{\alpha_{n-1,1},\alpha_{n-1,2}}' + \dots + e_{\alpha_{n-1,q-1},n}',
\end{align*}
since $v(x_{q+1}) \in \gamma_{q+1} H$.
Observe that $\Gamma(u_{q+1})$ contains the path from $\alpha_{1,1}$ to $n$, passing through the edges corresponding to the generators~(\ref{eq:algebra_generators}) of $\mathfrak{A}$.

Suppose that $x_i=1+u_i \in H$ is such that its image is a solution of the replica of~(\ref{eq:main_th}) in the factor $H / \gamma_i H$.
Then in the matrices $L(x_i)=x_i^q$ and $R(x_i)=u(1)v(x_i)$ first $i-1$ superdiagonals are identical.
Probably the matrices $L(x_i)$ and $R(x_i)$ are not equal modulo $\gamma_{i+1} H$ because of different coefficients of the elements $e_{\alpha,\beta} \in \mathfrak{A}^{(i)} \setminus \mathfrak{A}^{(i+1)}$, corresponding to the $i$-th superdiagonal.
There are $m-i$ such elements
\[
e_{\alpha_1,\beta_1}, e_{\alpha_2,\beta_2}, \dots, e_{\alpha_{m-i},\beta_{m-i}},
\]
where $\alpha_1 < \alpha_2 < \dots < \alpha_{m-i}$.
For each $e_{\alpha_j,\beta_j}$ ($j=m-i,\dots,1$) we will construct $y_j \in \mathfrak{A}$ such that the matrices $L(x_i + y_j + \dots + y_{m-i})$ and $R(x_i + y_j + \dots + y_{m-i})$ are equal modulo $\gamma_i H$ and the corresponding coefficients of $e_{\alpha_j,\beta_j}, e_{\alpha_{j+1},\beta_{j+1}}, \dots, e_{\alpha_{m-i},\beta_{m-i}}$ are equal.
Let $y_{j+1},\dots,y_{m-i}$ $(1 \leq j \leq m-i)$ be already constructed.
Write $w_l$, $w_r$ for the coefficients of $e_{\alpha_j,\beta_j}$ in $L(x_i + y_{j+1} + \dots + y_{m-i})$ and $R(x_i + y_{j+1} + \dots + y_{m-i})$ respectively.
The graph $\Gamma(u_i)$ contains the path $\mathcal{P}$ of length $q-1$ from some vertex $\tau$ (such a vertex is uniquely determined by $\beta_j$) to $\beta_j$, passing through the edges corresponding to the generators~(\ref{eq:algebra_generators}) of $\mathfrak{A}$.
Take $y_j = w_j e_{\alpha_j,\tau}$ where $w_j$ is defined by
\[
w_j w(\mathcal{P}) + w_l = w_r.
\]
According to Lemma~\ref{lm:power} and Remark~\ref{re:power} in the left part the element $w_j w(\mathcal{P}) e_{\alpha_j,\beta_j}$ will appear, and also some elements of the form $e_{\alpha,\beta} \in \mathfrak{A}^{(i+1)}$ or $e_{\alpha,\beta} \in \mathfrak{A}^{(i)} \setminus \mathfrak{A}^{(i+1)}$, furthermore in the latter case we will have $\beta < \beta_j$.
Observe that $e_{\alpha_j,\tau} \in \mathfrak{A}^{(i+1-q)}$, then, according to Lemma~\ref{lm:commutator},
\[
R(x_i + y_j + \dots + y_{m-i}) = R(x_i + y_{j+1} + \dots + y_{m-i}) + \mathcal{E},
\]
where $\mathcal{E} \in \mathfrak{A}^{(i+1)}$.
Thus addition of $y_j$ doesn't change the coefficients of $e_{\alpha_i,\beta_i}$ in the right part.
After addition of $y_1$ the left part remains unchanged.
Take $x_{i+1} = x_i + y_1 + \dots + y_{m-i}$.
By construction the image of $x_{i+1}$ is a solution of the replica of~(\ref{eq:main_th}) in $H / \gamma_{i+1} H$.

Continuing in this way we construct $x_m \in H$~--- a solution of~(\ref{eq:main_th}).

{\bf Case 2}.
In this case embedding~(\ref{eq:UT_embedding_2}) should be used instead of~(\ref{eq:UT_embedding_1}).
The remaining proof is similar to the case $1$.

{\bf Case 3}.
In this case we also treat UT$_n(\mathbb{F}_p)$ as the subgroup of UT$_m(\mathbb{F}_p)$ with respect to embedding~(\ref{eq:UT_embedding_1}).

Solution of the replica of~(\ref{eq:main_th}) in $H / \gamma_{q+1} H$ is the image of the element
\begin{align*}
x_{q+1} = 1 &+ e_{\alpha_{1,1},\alpha_{1,2}}' + \dots + e_{\alpha_{1,q-1},2}' \\
        &+ e_{\alpha_{2,1},\alpha_{2,2}}' + \dots + e_{\alpha_{2,q-1},3}' \\
        &+ \dots \\
        &+ e_{\alpha_{n-1,1},\alpha_{n-1,2}}' + \dots + e_{\alpha_{n-1,q-1},n}',
\end{align*}
since $x_{q+1}^q=1$ and $x_{q+1}$ commutes with the images of elements of $G$, i.e., $v(x_{q+1})=1$.
Further we continue as in the first case.
On iterations $q+2,\dots,m-1$ we will have $x_{q+2}=\dots=x_{m-1}=x_{q+1}$.
On iteration $m$ we obtain $x_m = x_{q+1} + a_{1,n} e_{1,\alpha_{n-1,1}}$~--- a solution of the equation.

The case of an arbitrary exponent now follows from Lemma~\ref{lm:prime_exponent}.

\end{proof}

From Theorem~\ref{th:main} we obtain the following result.

\begin{theorem}
\label{th:UT_3}
Any regular equation with exponent $rp^s$, where $r \in \mathbb{Z}$, $s \in \mathbb{N}$, $\gcd(r,p)=1$, over the Heisenberg $p$-group UT$_3(\mathbb{F}_p)$ is solvable in an overgroup isomorphic to UT$_{2p^s + 1}(\mathbb{F}_p)$.
\end{theorem}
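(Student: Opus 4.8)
The plan is to obtain Theorem~\ref{th:UT_3} as a direct specialization of Theorem~\ref{th:main} to the case $n=3$, so that the only thing left to verify is that every regular equation over UT$_3(\mathbb{F}_p)$ falls under at least one of the three conditions of that theorem. First I would write $u(1)=(a_{i,j})$ explicitly as the $3 \times 3$ upper unitriangular matrix whose first superdiagonal consists of the two entries $a_{1,2}$ and $a_{2,3}$, with corner entry $a_{1,3}$.

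Next I would translate each of the three conditions of Theorem~\ref{th:main} into its meaning for $n=3$. Since the index range $i=2,\dots,n-1$ collapses to the single value $i=2$, condition~1 becomes simply $a_{2,3} \neq 0$; likewise the range $i=1,\dots,n-2$ collapses to $i=1$, so condition~2 becomes $a_{1,2} \neq 0$. Finally, because the centre of UT$_3(\mathbb{F}_p)$ consists exactly of the matrices with $a_{1,2}=a_{2,3}=0$, condition~3 becomes the requirement that both first-superdiagonal entries vanish.

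The core step is then a short case distinction on the pair $(a_{1,2},a_{2,3})$: if $a_{2,3} \neq 0$ we are in case~1, if $a_{1,2} \neq 0$ we are in case~2, and if $a_{1,2}=a_{2,3}=0$ we are in case~3. These three possibilities are clearly exhaustive, so every regular equation over UT$_3(\mathbb{F}_p)$ satisfies one of the hypotheses of Theorem~\ref{th:main}. Applying that theorem with $n=3$ then yields a solution in an overgroup isomorphic to UT$_{(n-1)p^s+1}(\mathbb{F}_p) =$ UT$_{2p^s+1}(\mathbb{F}_p)$, as required.

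I do not expect any genuine obstacle here: the substance of the result resides entirely in Theorem~\ref{th:main}, and the feature of $n=3$ that makes the corollary clean is that the first superdiagonal has only two entries, so the three conditions (\emph{the right entry is nonzero}, \emph{the left entry is nonzero}, and \emph{both entries are zero}) exhaust all cases. The only point requiring a moment of care is the identification of the centre of UT$_3(\mathbb{F}_p)$, which guarantees that the degenerate case $a_{1,2}=a_{2,3}=0$ is indeed covered by condition~3 rather than being left out; for $n \geq 4$ no such clean exhaustion is available, which is precisely why the full strength of Theorem~\ref{th:main} does not immediately settle the higher-dimensional cases.
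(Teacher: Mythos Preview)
Your proposal is correct and follows essentially the same approach as the paper's own proof, which simply states that for any element of UT$_3(\mathbb{F}_p)$ one of the conditions of Theorem~\ref{th:main} holds and then applies that theorem. You have merely spelled out in detail the case distinction on $(a_{1,2},a_{2,3})$ that the paper leaves implicit.
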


\begin{proof}
It is clear that for any element of UT$_3(\mathbb{F}_p)$ one of the conditions of Theorem~\ref{th:main} holds.
\end{proof}

\end{document}